\title{Density of growth rates of subgroups of a  free group -- an alternative proof}
\author{\'Ad\'am Tim\'ar}
\renewcommand\footnotemark{}
\newif\ifhyper\IfFileExists{hyperref.sty}{\hypertrue}{\hyperfalse}
\ifhyper\usepackage{hyperref}\fi
\theoremstyle{definition}
\newtheorem{theorem}{Theorem}
\newtheorem{lemma}[theorem]{Lemma}
\def\supp{{\rm supp}}
\def\max{{\rm max}}
\def\min{{\rm min}}
\def\dist{{\rm dist}}
\def\Aut{{\rm Aut}}
\def\id{{\rm id}}
\def\Stab{{\rm Stab}}
\begin{document}
\maketitle
\let\thefootnote\relax\footnotetext{\footnotesize{Partially supported by Icelandic  Research Fund grant 239736-051 and the ERC
grant No. 810115-DYNASNET.}}

\bigskip

\def\eref#1{(\ref{#1})}
\newcommand{\Prob} {{\bf P}}
\newcommand{\C}{\mathcal{C}}
\newcommand{\LL}{\mathcal{L}}
\newcommand{\Z}{\mathbb{Z}}
\newcommand{\N}{\mathbb{N}}
\newcommand{\HH}{\mathbb{H}}
\newcommand{\Rr}{\mathbb{R}^3}
\newcommand{\h}{\mathcal{H}}
\def\diam{\mathrm{diam}}
\def\length{\mathrm{length}}
\def\ev#1{\mathcal{#1}}
\def\Isom{{\rm Isom}}
\def\Re{{\rm Re}}
\def \eps {\epsilon}
\def \P {{\Bbb P}}
\def \E {{\Bbb E}}

\def\supp{{\rm supp}}
\def\max{{\rm max}}
\def\min{{\rm min}}
\def\dist{{\rm dist}}
\def\Aut{{\rm Aut}}
\def\id{{\rm id}}
\def\Stab{{\rm Stab}}

\newcommand{\lra}{\leftrightarrow}
\newcommand{\xlra}{\xleftrightarrow}
\newcommand{\xnlra}{\xnleftrightarrow}
\newcommand{\pc}{{p_c}}
\newcommand{\pt}{{p_T}}
\newcommand{\ptk}{{\hat{p}_T}}
\newcommand{\pl}{{\tilde{p}_c}}
\newcommand{\pe}{{\hat{p}_c}}
\newcommand{\pr}{\mathrm{\mathbb{P}}}
\newcommand{\pp}{\mu}
\newcommand{\ex}{\mathrm{\mathbb{E}}}
\newcommand{\ee}{\mathrm{\overline{\mathbb{E}}}}

\newcommand{\om}{{\omega}}
\newcommand{\ebd}{\partial_E}
\newcommand{\ivbd}{\partial_V^\mathrm{in}}
\newcommand{\ovbd}{\partial_V^\mathrm{out}}
\newcommand{\q}{q}
\newcommand{\TT}{\mathfrak{T}}
\newcommand{\T}{\mathcal{T}}
\newcommand{\RR}{\mathcal{R}}

\newcommand{\CC}{\Pi}
\newcommand{\BB}{\Pi}

\newcommand{\A}{\mathcal{A}}
\newcommand{\cc}{\mathbf{c}}
\newcommand{\pa}{{PA}}
\newcommand{\degi}{\deg^{in}}
\newcommand{\dego}{\deg^{out}}
\def\Pn{{\bf P}_n}

\newcommand{\R}{\mathbb R}
\newcommand{\F}{F}
\newcommand{\FF}{\mathfrak{F}}
\newcommand{\Can}{\rm Can}
\newcommand{\Vol}{\mathrm Vol}
\def\br{{\rm br}}
\def\gr{{\rm gr}}
\def\Gstar{{{\cal G}_{*}}}
\def\Gstarstar{{{\cal G}_{**}}}
\def\Gstarplus{{{\cal G}_{*}^\frown}}
\def\Rel{{\cal R}}
\def\Comp{{\rm Comp}}
\def\calT{{\cal T}}
\def\omps{{\omega_\delta^\eps}}

\begin{abstract}
We give an alternative proof to the theorem recently proved by Louvaris, Wise and Yehuda, 
that the growth rates of finitely generated subgroups of $F_r$ are dense in $[1,2r-1]$.
\end{abstract}

\bigskip



\section{Introduction}


Fix a free generating set $S$ for the free group $F_r$ of rank $r \ge 2$. For a subgroup $H \le F_r$, define the growth function
\[
\gamma_H(n) = \bigl| \{ h \in H : |h|_S \le n \} \bigr|,
\]
where $|\cdot|_S$ denotes word length in $F_r$ with respect to $S$, and the (exponential) growth rate of $H$ by
\[
\omega(H) = \lim_{n \to \infty} \gamma_H(n)^{1/n}.
\]
This limit exists by Fekete's lemma.

The growth rate of $F_r$ itself with respect to a free basis equals $2r-1$. Any subgroup $H \le F_r$ has growth rate lying in the interval $[1,2r-1]$. Understanding which values in this interval arise as growth rates of subgroups of $F_r$ is a natural problem, closely linked to the correspondence between subgroups and graph immersions and to the spectral theory of non-backtracking operators. See \cite{LWY2} for an overview of related results, including those about groups where the growth rates are not dense (that is, a ``growth-gap'' is present), and \cite{KP} for results on the possible values of several monotone parameters (e.g., spectral radius, asymptotic entropy, percolation thresholds) associated with Cayley graphs of finitely generated groups.

\begin{theorem}[Louvaris-Wise-Yehuda, \cite{LWY2}]\label{main}
The set of growth rates of finitely generated subgroups of $F_r$ is dense in $[1,2r-1]$.
\end{theorem}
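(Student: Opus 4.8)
The plan is to translate the statement into a question about non-backtracking walks on finite labelled graphs via Stallings' theory, and then to produce a dense family of growth rates by ``sweeping'' a fixed expander graph through a chain of local modifications.

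\textbf{Reduction to core graphs.} For a finitely generated $H\le F_r$, let $\Gamma_H$ be its Stallings core graph: the finite connected folded $S$-labelled graph, with a basepoint, whose reduced closed paths at the basepoint are exactly the reduced words of $H$. A reduced word of $F_r$ lies in $H$ iff the corresponding path closes up at the basepoint of the Schreier graph of $H$, and such a closed path is automatically non-backtracking and never enters the trees hanging off $\Gamma_H$; hence $\gamma_H(n)$ equals the number of non-backtracking closed walks of length at most $n$ in $\Gamma_H$, and $\omega(H)=\mu(\Gamma_H)$, where for a finite graph $\Gamma$ we write $\mu(\Gamma)$ for the exponential growth rate of the number of non-backtracking closed walks, equivalently the spectral radius of the non-backtracking operator. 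Conversely, any finite connected graph of maximum degree at most $2r$ carries a folded $S$-labelling, so it is enough to prove that $\{\mu(\Gamma):\Gamma\text{ finite connected, }\delta(\Gamma)\ge 2,\ \Delta(\Gamma)\le 2r\}$ is dense in $[1,2r-1]$. Single cycles realise $\mu=1$ and finite $2r$-regular graphs realise $\mu=2r-1$.

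\textbf{Subdivision and a reduction to the top interval.} If $\Gamma^{(m)}$ denotes $\Gamma$ with every edge replaced by a path of length $m$, then every non-backtracking closed walk in $\Gamma^{(m)}$ breaks into edge-traversals, which are forced at the new degree-$2$ vertices; hence such walks of length $km$ correspond bijectively to non-backtracking closed walks of length $k$ in $\Gamma$, giving $\mu(\Gamma^{(m)})=\mu(\Gamma)^{1/m}$. In particular, density of the set of realisable $\mu$'s in an interval $[a,b]$ implies its density in $[a^{1/m},b^{1/m}]$. Taking $b=2r-1$, $a=\sqrt{2r-1}$ and $m=2^k$, the intervals $[\,b^{2^{-k-1}},b^{2^{-k}}\,]$ ($k\ge0$) are consecutive and exhaust $(1,b]$; since $\mu=1$ is realised, it therefore suffices to prove density of the realisable $\mu$'s in $[\sqrt{2r-1},\,2r-1]$.

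\textbf{The sweep.} Fix $\varepsilon>0$ and let $\Gamma$ be a $2r$-regular expander on $N$ vertices with girth tending to infinity (for instance a Ramanujan graph; it may also be taken to be the Schreier graph of a finite-index subgroup of $F_r$). Label its $M=rN$ edges $e_1,\dots,e_M$ and let $\Gamma_j$ be obtained from $\Gamma$ by subdividing each of $e_1,\dots,e_j$ once; thus $\Gamma_0=\Gamma$ has $\mu=2r-1$, while $\Gamma_M=\Gamma^{(2)}$ has $\mu=\sqrt{2r-1}$. Every $\Gamma_j$ has maximum degree $2r$, minimum degree $\ge 2$, girth $\ge \mathrm{girth}(\Gamma)$, and is an expander with Cheeger constant bounded below uniformly in $j$ and $N$ (expansion passes from $\Gamma$ to each $\Gamma_j$ since subdividing edges does not destroy it). The crux is the estimate $|\mu(\Gamma_{j-1})-\mu(\Gamma_j)|\le\varepsilon$ for all $j$, once $N$ is large enough depending on $\varepsilon$. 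Granting it, the values $\mu(\Gamma_0),\dots,\mu(\Gamma_M)$ march from $2r-1$ down to $\sqrt{2r-1}$ in steps of size at most $\varepsilon$, hence form an $\varepsilon$-net of $[\sqrt{2r-1},2r-1]$; letting $\varepsilon\to0$ (so $N\to\infty$) gives density there, and by the previous paragraph Theorem~\ref{main} follows.

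\textbf{The main obstacle.} Everything rests on the perturbation bound: subdividing one edge changes $\mu$ by $o_N(1)$, uniformly in $j$. Since $\mu$ is the Perron eigenvalue of the \emph{non-symmetric} non-backtracking operator, the naive variational comparison is unavailable; one must work with both the left and right Perron eigenvectors, using that for a high-girth expander the non-backtracking operator has a spectral gap bounded below (this persists under edge-subdivision) and its top eigenvector is non-localized, so it places only $o_N(1)$ of its mass in any fixed-radius neighbourhood of a given edge --- and a subdivided edge affects the operator only in such a neighbourhood. Both hypotheses are essential: large girth rules out a bounded subgraph with $\mu$ near $\sqrt{2r-1}$ that could swing under a single subdivision, and the spectral gap is what turns ``localized perturbation'' into ``small change of $\mu$''. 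Quantifying non-localization for this family of partially subdivided expanders is the technical heart of the argument.
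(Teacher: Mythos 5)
Your reduction and overall sweep strategy match the paper's: you translate to the non-backtracking spectral radius (equivalently, the growth of the universal cover tree) of finite graphs with degrees in $\{2,\dots,2r\}$, observe that full $m$-fold subdivision raises $\mu$ to the power $1/m$ so it suffices to fill one ``octave'' such as $[\sqrt{2r-1},2r-1]$, and then interpolate between $\Gamma$ and its $2$-subdivision by subdividing edges a few at a time. All of that is sound and is indeed the skeleton of the paper's argument.

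The genuine gap is the perturbation bound you single out as ``the technical heart'': that subdividing a single edge of a high-girth expander changes the non-backtracking Perron eigenvalue by $o_N(1)$, uniformly along the whole sweep. You do not prove it, and it is not a small omission. Because $B_\Gamma$ is non-normal, one needs control of both Perron eigenvectors; one needs delocalization estimates that hold not just for $\Gamma$ but for every intermediate graph $\Gamma_j$ (whose degree sequence drifts toward half degree-$2$ vertices, so neither the Cheeger constant nor the non-backtracking spectral gap is obviously uniform in $j$); and one needs to show a spectral gap survives all these subdivisions. This is essentially the hard probabilistic/spectral content of the original Louvaris--Wise--Yehuda paper, which the present paper was written precisely to avoid.

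The paper's way around it is worth internalizing, because it is the one genuinely new idea. Rather than subdividing one edge at a time and appealing to eigenvector delocalization, the paper subdivides in each step an entire \emph{well-separated} batch of edges: one takes a finite graph $H'$ of girth $\ge n+1$ whose universal cover is the current tree, gives each edge of $H'$ its own color, and subdivides the full preimage of one color at a time. The key Lemma then gives, by a completely elementary ball-count in the universal cover (a geodesic of length $tn$ meets at most $t$ edges of a pairwise-$(n+1)$-separated set, so $S_{tn}(T)\subset B_{t(n+1)}(T')$), the clean bound $\gr(T')\ge\gr(T)^{n/(n+1)}$, hence a per-step drop at most $(d-1)^{1/K}\bigl(1-(d-1)^{-1/(K(n+1))}\bigr)$, which is made $<\varepsilon$ just by choosing the girth $n$ large. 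No spectral gap, no expansion, no eigenvector estimates. If you want to repair your write-up along your own lines you would have to supply a full delocalization-and-gap argument; the cheaper fix is to replace ``subdivide one edge'' with ``subdivide a $(n+1)$-separated color class'' and bound the step size by the growth comparison above.
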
 
The original proof is quite involved, using probabilistic constructions of graphs with prescribed spectral behavior and concentration results for the leading eigenvalue.


The aim of the present paper is to provide a short and elementary proof of the theorem. 



Given a finite connected graph $G$ of degrees in $\{2,\ldots, 2r\}$, one can turn it into a Schreier graph of $F_r$ (see e.g. \cite{C}) by choosing a basepoint, adding loops to make it $2r$-regular, and then suitably orienting the edges and labelling them by generators. Then the closed non-backtracking walks on this graph will correspond to the elements of a subgroup in $F_r$, hence the growth of this subgroup in the word metric of $F_r$ is given by the leading eigenvalue (Perron eigenvalue) $\lambda_1(B_G)$ of the non-backtracking matrix of $G$.
This simple reduction shows that to prove Theorem \ref{main}, it is enough to find a family of finite connected graphs of degrees in $\{2,\ldots, 2r\}$ such that the leading eigenvalue of their non-backtracking matrix is dense in $[1,2r-1]$. This is Theorem 3.2 in \cite{LWY2}. 

\begin{theorem}[Louvaris-Wise-Yehuda, \cite{LWY2}]\label{graph}
For every $1<r\in\N$ and $\alpha\in(1,2r-1)$ and $\eps>0$ there is a finite graph $G$ with degrees in $\{2,\ldots,2r\}$ such that $|\lambda_1(B_G)-\alpha|<\eps$.
\end{theorem}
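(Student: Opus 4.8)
\emph{Proof proposal.} The plan is to run the reduction recalled just before the statement and then exhibit an explicit family. Fix $1<r\in\N$. It is enough to build an infinite sequence $G_0,G_1,G_2,\dots$ of finite connected graphs with degrees in $\{2,2r\}$ such that $\lambda_1(B_{G_0})=2r-1$, $\lambda_1(B_{G_t})\to 1$, and $|\lambda_1(B_{G_{t+1}})-\lambda_1(B_{G_t})|<\eps$ for every $t$, once an auxiliary size parameter is chosen large in terms of $r$ and $\eps$. A discrete intermediate value argument then produces, for any $\alpha\in(1,2r-1)$, some $t$ with $|\lambda_1(B_{G_t})-\alpha|<\eps$, which is what Theorem \ref{graph} asks.

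\emph{The family.} Let $H=H_n$ be a connected $2r$-regular graph on $n$ vertices that is a good expander (e.g.\ a Ramanujan graph for suitable $n$, or a random $2r$-regular graph): all that is needed is that the second-largest-in-modulus eigenvalue of $B_H$ lies a constant (depending only on $r$) below the Perron value $2r-1$, and that this stays true after deleting a bounded number of edges. Since $H$ is $2r$-regular, $\lambda_1(B_H)=2r-1$ with constant Perron eigenvector. Form $G_t$ by subdividing edges one at a time, organized in rounds: in round $\ell\ge 1$ subdivide, one by one, each edge currently lying on a path of length $\ell$ between two degree-$2r$ vertices, so that at the end of round $\ell$ every such path has length $\ell+1$ and $G_t$ is $H$ with every edge replaced by a path of $\ell+1$ edges. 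Each step subdivides exactly one edge, and all $G_t$ have degrees in $\{2,2r\}$. The two endpoints are immediate: $\lambda_1(B_{G_0})=2r-1$; and once we are in round $\ell$, every edge of $G_t$ lies on a path of length $\ge\ell$ between degree-$2r$ vertices, so projecting a length-$m$ non-backtracking walk of $G_t$ to $H$ (each $H$-step costs $\ge\ell$ steps) shows there are at most $\mathrm{poly}(m)\,(2r-1)^{m/\ell}$ of them, whence $\lambda_1(B_{G_t})\le(2r-1)^{1/\ell}\to 1$.

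\emph{The crux: a uniform small-step bound.} Write $G'$ for the one-edge subdivision of $G=G_t$ at an edge $e$. Classifying non-backtracking walks by the number of passages through the affected path (each passage costs exactly one extra step) gives the elementary sandwich $\lambda_1(B_{G\setminus e})\le\lambda_1(B_{G'})\le\lambda_1(B_G)$. For the quantitative side, among the $\asymp\lambda_1(B_G)^m$ non-backtracking walks of length $m$ from a fixed directed edge a positive proportion use $e$ at most $(\pi(e)+o(1))m$ times, where $\pi(e)$ is the normalized left$\times$right Perron edge-weight of $e$ (the number of passages concentrates around $\pi(e)m$ under the Perron-biased walk, so a Markov/second-moment estimate suffices); these map to walks of length $\le(1+\pi(e)+o(1))m$ in $G'$, giving $\lambda_1(B_{G'})\ge\lambda_1(B_G)^{1/(1+\pi(e))}$. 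Hence each step changes $\lambda_1$ by at most $(2r-1)\bigl(1-(2r-1)^{-\pi(e)}\bigr)$, and the whole argument reduces to: every edge of every $G_t$ has Perron weight $O_r(1/n)$. Along a degree-$2$ path the left$\times$right eigenvector product is constant, so the weight of a replaced path is carried evenly by its edges and is determined by the Perron eigenvector restricted to the degree-$2r$ vertices, which form a subdivided copy of the expander $H$; a constant spectral gap — stable under the bounded-rank perturbation that separates $G_t$ from an exactly uniform subdivision of $H$ — keeps that restricted eigenvector within an $O_r(1)$ factor of constant, so no path carries weight more than $O_r(1/n)$.

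\emph{What is hard.} The main obstacle is precisely this last estimate: propagating ``all Perron edge-weights are $O_r(1/n)$'' through all the intermediate, only approximately uniform subdivisions $G_t$. Morally it is a delocalization statement for the top of the spectrum of the non-backtracking matrix under a localized perturbation, and it is the only place where expansion of $H$ is essential; it must be set up carefully (e.g.\ via a Collatz–Wielandt test function obtained by a Green's-function correction supported near $e$, whose size is controlled by the gap, or via the direct walk-counting concentration estimate mentioned above). Granting it, taking $n$ large makes every step smaller than $\eps$, and the discrete intermediate value argument completes the proof of Theorem \ref{graph}.
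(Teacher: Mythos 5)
Your proposal is a genuinely different route from the paper's, and it contains a real gap at the step you yourself flag as ``the crux.''

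The paper does not subdivide one edge at a time and does not invoke any spectral-gap or eigenvector-delocalization input at all. Its key move is combinatorial: start not from an expander but from a finite graph $H$ whose universal cover is the $K$-subdivided $d$-regular tree (growth $(d-1)^{1/K}$), pass to a lift $H'$ of girth $\ge n+1$, and properly edge-color $H'$. Pulling the coloring back to the tree gives color classes whose edges are pairwise at distance $\ge n+1$, and the Lemma — a two-line non-backtracking-walk count — shows that subdividing an \emph{entire} such far-apart class at once changes the growth by at most $(d-1)^{1/K}\bigl(1-(d-1)^{-1/(K(n+1))}\bigr)<\eps$. After all $N$ colors have been processed the growth has reached $(d-1)^{1/(2K)}$, and the discrete intermediate value argument you also use finishes the job, with $K$ ranging over $\N$ to tile $(1,2r-1)$. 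Nothing about Perron eigenvectors, nothing about expansion.

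Your route replaces the large-girth lift by expansion of $H$ and tries to control the effect of a \emph{single} subdivision via the Perron edge-weight $\pi(e)$ of $B_{G_t}$. Two things go wrong. First, the quantitative lower bound $\lambda_1(B_{G'})\ge\lambda_1(B_G)^{1/(1+\pi(e))}$ rests on a concentration claim (``the number of passages through $e$ concentrates around $\pi(e)m$'') which is only asserted; a genuine second-moment or large-deviation estimate for the non-backtracking Perron-biased walk is needed and is not supplied. Second, and more seriously, the claimed delocalization bound $\pi(e)=O_r(1/n)$ for \emph{every} intermediate $G_t$ is leaned on a ``constant spectral gap, stable under bounded-rank perturbation.'' But the relevant gap is for $B_{G_t}$, and after $\ell$ rounds of subdivision $\lambda_1(B_{G_t})\approx(2r-1)^{1/(\ell+1)}$ while the subdominant modulus is about $(2r-1)^{1/(2(\ell+1))}$; the ratio tends to $1$ as $\ell\to\infty$, so the gap is \emph{not} bounded below, and a perturbation bound of the form $\|v-v_0\|\lesssim\|\Delta B\|/\mathrm{gap}$ degenerates exactly where you would need it. The expander hypothesis on $H$ alone does not rescue this. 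These are not cosmetic omissions: they are the entire content of the small-step estimate, and as written the argument does not close. The paper's large-girth-lift device is precisely what makes all of this unnecessary.
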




Call a tree $T$ {\it strongly periodic} if it is the universal cover of a finite simple graph. The growth rate $\lim_{n\to\infty}|B_n(T)|^{1/n}=\lim_{n\to\infty}|S_n(T)|^{1/n}$ of such a tree always exists, where $S_n(T)$ and $B_n(T)$ respectively denote the sphere and ball of radius $n$ around a fixed vertex.
The leading eigenvalue of the non-backtracking matrix of $G$ is the same as the growths of the universal cover of $G$ (see e.g. Lemma 6 in \cite{EH}). Hence the theorem is equivalent to saying that the set of growths of strongly periodic trees with degrees in $\{2,\ldots,2r\}$ is dense in $[1,2r-1]$.

\section{Proofs}

\begin{lemma}
Let $n\in \N$, and $T$ be an arbitrary strongly periodic tree of degree at most $d$. Let $F\subset E(T)$ be a subset of edges of pairwise distance at least $n+1$. Construct $T'$ from $T$ by subdividing every edge of $F$ (i.e., replacing it by a path of length 2). Then if $T'$ is strongly periodic then $\gr (T')\geq \gr (T)^{\frac{n}{n+1}}$, and hence
$$0\leq \gr (T)-\gr (T')\leq \gr (T)(1-\gr(T)^{\frac{-1}{n+1}}).
$$
\end{lemma}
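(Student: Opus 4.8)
The plan is to compare the growth rates of $T$ and $T'$ by estimating $|S_m(T')|$ in terms of $|S_m(T)|$. Since subdividing an edge replaces a length-$1$ step by a length-$2$ step, any walk in $T$ that traverses $k$ edges of $F$ lifts to a walk in $T'$ of length at most $k$ longer. The key geometric input is the separation hypothesis: edges of $F$ are pairwise at distance $\ge n+1$, so along any non-backtracking path of length $n$ in $T$ at most one edge of $F$ is used (in fact the bound we want is a density statement: out of every $n+1$ consecutive edges, at most one is subdivided). Hence a non-backtracking path of length $m$ in $T$ lifts to a non-backtracking path in $T'$ of length at most $m + \lceil m/(n+1)\rceil \le m\cdot\frac{n+2}{n+1}$ — but since the rates of sphere and ball sizes agree, the cleaner route is: a path of length $\ell$ in $T'$ is the lift of a path of length at least $\ell\cdot\frac{n}{n+1}$ in $T$, because among the $\ell$ edges traversed in $T'$, the subdivided ones come in consecutive pairs separated (in $T$) by $\ge n$ edges, so at least a fraction $\frac{n}{n+1}$ of the $T$-edges are non-subdivided... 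I would make this precise by the estimate $|S_\ell(T')| \ge |S_{\lceil \ell n/(n+1)\rceil}(T)|$ for a suitable starting vertex, or rather work with balls to avoid parity issues: $|B_\ell(T')|\ge |B_{\lfloor \ell n/(n+1)\rfloor}(T)|$.

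From such an inequality the growth-rate bound follows by taking $\ell$-th roots and letting $\ell\to\infty$: writing $c=\gr(T)$, one gets $\gr(T') = \lim_\ell |B_\ell(T')|^{1/\ell} \ge \lim_\ell (c^{\ell n/(n+1)})^{1/\ell} = c^{n/(n+1)} = \gr(T)^{\frac{n}{n+1}}$. (One uses here that $\gr(T)$, being the growth of a strongly periodic tree, is genuinely the exponential rate, so $|B_k(T)|\ge c^{k(1-o(1))}$, which is enough.) The upper bound $\gr(T')\le\gr(T)$ is immediate since $T'$ embeds in $T$ as a subspace-with-subdivided-edges in a rate-nonincreasing way — more carefully, a non-backtracking walk in $T'$ projects to a non-backtracking walk in $T$ of no greater length, so $|B_\ell(T')|\le|B_\ell(T)|$ and hence $\gr(T')\le\gr(T)$. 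Combining, $\gr(T)^{\frac{n}{n+1}}\le\gr(T')\le\gr(T)$, and the displayed inequality $0\le\gr(T)-\gr(T')\le\gr(T)(1-\gr(T)^{\frac{-1}{n+1}})$ is just a rewriting of the lower bound, since $\gr(T)-\gr(T)^{\frac{n}{n+1}} = \gr(T)\bigl(1-\gr(T)^{-\frac1{n+1}}\bigr)$.

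The step I expect to be the main obstacle is making the combinatorial lifting/projection estimate between path lengths in $T$ and $T'$ fully rigorous, in particular getting the constant $\frac{n}{n+1}$ rather than something weaker. The subtlety is that the bound "at most one subdivided edge per $n+1$ consecutive edges" must be applied to an arbitrary non-backtracking path, and one must be careful that subdivision vertices have degree $2$ so a non-backtracking walk through them is forced — this is exactly why the conclusion is clean. I would handle it by setting up the natural bijection between non-backtracking walks in $T'$ starting at an original vertex and non-backtracking walks in $T$ starting at the same vertex, noting that this bijection increases length by exactly the number of $F$-edges traversed, and then bounding that number of $F$-edges using the distance-$(n+1)$ hypothesis along the projected walk. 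The hypothesis that $T'$ is strongly periodic is used only to guarantee that $\gr(T')$ exists as an honest limit; it plays no role in the inequality itself.
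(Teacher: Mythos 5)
Your approach is essentially the paper's: both compare ball/sphere sizes in $T$ and $T'$, using the distance-$(n+1)$ hypothesis to bound the density of subdivided edges along any geodesic, which yields $\gr(T')\ge\gr(T)^{n/(n+1)}$ (the paper writes this same estimate as the containment $V(S_{tn}(T))\subset V(S_{t(n+1)}(T'))$). One minor caveat on the easy direction: your claimed $|B_\ell(T')|\le|B_\ell(T)|$ does not literally follow from projecting non-backtracking walks, since the subdivision vertices of $T'$ have no counterpart in $T$ and the projection is therefore not injective on $V(T')$; a comparison such as $|B_\ell(T')|\le 2\,|B_{\ell+1}(T)|$ is easy to establish and still gives $\gr(T')\le\gr(T)$, while the paper instead invokes monotonicity of the critical percolation probability (equivalently, branching number) under edge contraction.
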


\begin{proof}
Consider the vertex set $V(T')$ as the union of $V(T)$ and the new vertices used in the subdivision of edges.
For every $t\in\N$ we have $V(S_{tn}(T))
\subset V(S_{t(n+1)}(T'))$, hence
$$\gr(T')=\lim_{t\to\infty} |S_{t(n+1)}(T')|^{\frac{1}{t(n+1)}}\geq \lim_{t\to\infty} \bigl(|S_{tn}(T)|^{\frac{1}{tn}}\bigr)^{\frac{tn}{t(n+1)}}=\gr(T)^{\frac{n}{n+1}}.
$$

The inequality $0\leq \gr (T)-\gr (T')$ follows by direct computation, or from the fact that we can retain $T$ from $T'$ by contracting one edge for each of the subdivided edges, and the critical percolation probability $p_c=\frac{1}{\br}=\frac{1}{\gr}$ is monotone decreasing under contractions of edges whenever every contracted vertex has a finite set of preimages. (Note that the branching number $\br$ always equals $p_c^{-1}$, and it is equal to $\gr$ for periodic trees such as $T$ and $T'$, see \cite{LP}.)
\end{proof}

\begin{proof}[Proof of Theorem~\ref{graph}]
As noted after Theorem \ref{graph}, the claim is equivalent to the growths of all strongly periodic trees being dense. It is enough to show that 
for every $K\in \N$ and $(a,a+\eps)\subset \bigl((d-1)^{\frac{1}{2K}},(d-1)^{\frac{1}{K}}\bigr)$, $\eps>0$, there exists a strongly periodic tree of growth in $(a,a+\eps)$. 

Fix $n\in\N$ such that 
$$(d-1)^{\frac{1}{K}}-(d-1)^{\frac{n}{K(n+1)}}=(d-1)^{\frac{1}{K}}(1-(d-1)^{\frac{-1}{K(n+1)}})<\eps.$$

Replacing each edge of the $d$-regular tree by a path of length $K$ results in a strongly periodic tree $\T_K$ that has growth $(d-1)^{\frac{1}{K}}$. 
If $H$ is a finite graph such that $\T_K$ is the universal cover of $H$, first take a lift $H'$ of $H$ that has girth at least $n+1$. (Such a lift exists, see \cite{1}.) Clearly, $\T_K$ is also a universal cover for $H'$. Now, for a suitable $N$, assign to each edge of $H'$ a distinct color of the color set $\{1,\ldots,N\}$, and pull this coloring back by the cover map to obtain a coloring of the edges of $\T_K$ by $\{1,\ldots,N\}$ with the property that edges of the same color are at distance at least $n+1$ from each other. Starting with $T^0:=\T_K$, recursively define $T^{i}$ from $T^{i-1}$ by subdividing each of its edges of color $i$. Every $T^i$ is strongly periodic, because it is the universal cover of the finite graph obtained from $H'$ by subdividing its edges of colors $\{1,\ldots,i\}$. By the lemma, 
$$
(d-1)^{\frac{1}{K}}=\gr (\T_K)=\gr(T^0)\geq\gr(T^1)\geq\ldots\geq\gr(T^{N-1})\geq\gr(T^N)= (d-1)^{\frac{1}{2K}},
$$
and
$$\gr(T^i)-\gr(T^{i+1})\leq \gr (T^i)(1-\gr(T^i)^{\frac{-1}{n+1}})\leq \gr (T^0)(1-\gr(T^0)^{\frac{-1}{n+1}}) \leq (d-1)^{\frac{1}{K}}(1-(d-1)^{\frac{-1}{K(n+1)}})<\eps,
$$
where the second inequality follows from the fact that $x(1-x^{\frac{-1}{n+1}})$ is monotone increasing on $[1,\infty)$. Hence as $\gr(T^i)$ proceeds from $(d-1)^{\frac{1}{K}}$ to $(d-1)^{\frac{1}{2K}}$, always changing by less than $\eps$, one of the $\gr(T^i)$ values has to be in $(a,a+\eps)$. 
\end{proof}
Theorem \ref{main} now follows, by the argument before Theorem \ref{graph}.


\bigskip

\ \\
Division of Mathematics, The Science Institute, University of Iceland, Reykjavik, Iceland,\\
and\\
HUN-REN Alfr\'ed R\'enyi Institute of Mathematics, Budapest, Hungary.

\noindent
\texttt{madaramit[at]gmail.com}\\ 


\end{document}